\let\BFseries\bfseries\def\bfseries{\BFseries\mathversion{bold}} 
\DeclareMathSymbol{\leqslant}{\mathalpha}{AMSa}{"36} 
\DeclareMathSymbol{\geslant}{\mathalpha}{AMSa}{"3E} 
\DeclareMathSymbol{\eset}{\mathalpha}{AMSb}{"3F}     
\newcommand{\dd}{\,\text{\rm d}}             
\newcommand{\ind}{1\hspace{-0.098cm}\mathrm{l}}
\newcommand{\IR}{\mathbb{R}}
\newcommand{\be}{\begin{eqnarray*}}
\newcommand{\ee}{\end{eqnarray*}}
\newcommand{\ben}{\begin{eqnarray}}
\newcommand{\een}{\end{eqnarray}}
\theoremstyle{plain}
\newtheorem{thm}{Theorem}
\newtheorem{lem}[thm]{Lemma}
\theoremstyle{definition}
\newtheorem{remark}[thm]{Remark}
\renewenvironment{proof}[1][] {\smallskip \noindent {\bf Proof#1.} }{\hspace*{\fill}$\square$\medskip\par}
\def\P{{\bf {\mathbb{P}}}}
\newcommand{\pr}[1]{\P\left[#1\right]}
\def\E{\mathbb{E}}
\newcommand{\indi}[1]{\,\ind_{\{#1\}}}
\def\R{\IR}
\def\d{{\rm d}}
\begin{document}
\title{First exit of Brownian motion from a one-sided moving boundary}
\author{Frank Aurzada and Tanja Kramm}
\date{\today}
\maketitle
\begin{abstract}
 We revisit a result of Uchiyama (1980): given that a certain integral test is satisfied, the rate of the probability that Brownian motion remains below the moving boundary $f$ is asymptotically the same as for the constant boundary. The integral test for $f$ is also necessary in some sense.

 After Uchiyama's result, a number of different proofs appeared simplifying the original arguments, which strongly rely on some known identities for Brownian motion. In particular, Novikov (1996) gives an elementary proof in the case of an increasing boundary. Here, we provide an elementary, half-page proof for the case of a decreasing boundary. Further, we identify that the integral test is related to a repulsion effect of the three-dimensional Bessel process. Our proof gives some hope to be generalized to other processes such as FBM.
\end{abstract}

%
\noindent {\it Keywords and phrases.} Brownian motion; Bessel process; moving boundary; first exit time; one-sided exit problem

\section{Introduction}
This note is concerned with the first exit time distribution of Brownian motion from a so-called moving boundary:
$$
\pr{ B_t \leq f(t), t\leq T},\qquad \text{as $T\to\infty$,}
$$
where $B$ is a Brownian motion and $f : [0,\infty)\to \R$ is the ``moving boundary''. The question we treat here is follows: for which functions $f$ does the above probability have the same asymptotic rate as in the case $f\equiv 1$? This problem was considered by a number of authors \cite{bramson1978,Uch,Gae,novikov1980,Novneu} and, besides being a classical problem for Brownian motion, has some implications for the so-called KPP equation (see e.g.\ \cite{Gae}), for branching Brownian motion (see e.g.\ \cite{bramson1978}), and for other questions.

The solution of the problem was given by Uchiyama \cite{Uch}, G\"artner \cite{Gae}, and Novikov \cite{novikov1980} independently and can be re-phrased as follows.

\begin{thm} \label{thm:main}
 Let $f : [0,\infty)\to \R$ be a continuously differentiable function with $f(0)>0$ and
\begin{equation} \label{eqn:integraltest}
 \int_1^\infty |f(t)| \,t^ {-3/2} \, \d t < \infty.
\end{equation}
Then
\begin{equation} \label{eqn:assertmain}
\pr{ B_t \leq f(t), t\leq T} \approx T^{-1/2},\qquad \text{as $T\to\infty$.}
\end{equation}
If $f$ is either convex or concave and the integral test (\ref{eqn:integraltest}) fails, $T^{-1/2}$ is not the right order in (\ref{eqn:assertmain}).
\end{thm}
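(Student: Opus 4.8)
The plan is to split the asserted equivalence (\ref{eqn:assertmain}) into the two one-sided bounds $\pr{B_t\le f(t),\,t\le T}\le C\,T^{-1/2}$ and $\pr{B_t\le f(t),\,t\le T}\ge c\,T^{-1/2}$ and, for each, to reduce to a \emph{monotone} boundary, using that $g\mapsto\pr{B_t\le g(t),\,t\le T}$ is non-decreasing in $g$. If $f$ is non-increasing, then $f(t)\le f(0)$, so the reflection principle gives $\pr{B_t\le f(t),\,t\le T}\le\pr{B_t\le f(0),\,t\le T}=\pr{|B_T|\le f(0)}\sim\sqrt{2/\pi}\,f(0)\,T^{-1/2}$; dually, if $f$ is non-decreasing, then $f(t)\ge f(0)>0$ gives the matching lower bound at once. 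Hence the only substantive points are (i) the \emph{lower} bound for a decreasing $f$ and (ii) the \emph{upper} bound for an increasing $f$. For (ii) I would quote Novikov's elementary argument \cite{Novneu}; the passage from monotone to general $f$ I would carry out by squeezing $f$ between a non-decreasing and a non-increasing boundary still satisfying (\ref{eqn:integraltest}) and pasting the estimates along a dyadic grid via the Markov property --- a point that needs some care, since naive running extrema of $f$ need not preserve (\ref{eqn:integraltest}), so one first shaves the short excursions of $f$ into a uniformly small remainder controlled by the Bessel estimate below.

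For (i), put $Y_t:=f(t)-B_t$, so $Y_0=f(0)>0$ and the task is to bound $\pr{Y_t\ge0,\,t\le T}$ from below; if $f$ stays positive this is immediate, so assume $f$ dips below $0$ and, rescaling, that $f>0$ on $[0,1]$. On the positive-probability event that $Y$ stays non-negative on $[0,1]$ with $Y_1$ moderately large, the Markov property at time $1$ reduces the problem to bounding $\pr{y+\beta_s\ge g(s),\,s\le S}$ below by $c(y)\,S^{-1/2}$, where $\beta$ is a Brownian motion, $S=T-1$, and $g(s):=f(1)-f(1+s)$ is \emph{non-decreasing}, $g(0)=0$, with $\int_1^\infty g(s)\,s^{-3/2}\,\d s<\infty$ by (\ref{eqn:integraltest}). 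Now the three-dimensional Bessel process enters: since $g\ge0$, staying above $g$ forces $y+\beta$ to stay positive, so Doob's $h$-transform with $h(x)=x$ gives $\pr{y+\beta_s\ge g(s),\,s\le S}=\IE\bigl[\mathbf 1\{R_s\ge g(s),\,s\le S\}\cdot y/R_S\bigr]$, where $\IE$ denotes expectation when $R$ is a $\mathrm{BES}(3)$ process started at $y$; restricting to the event $\{R_S\in[\sqrt S,2\sqrt S]\}$ (of probability bounded away from $0$) extracts the factor $S^{-1/2}$, and it remains to prove $q(y):=\pr{R_s\ge g(s)\text{ for all }s\ge0\mid R_0=y}>0$.

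That $q(y)>0$ is the heart of the matter and the paper's new observation: it is a \emph{repulsion} property of $\mathrm{BES}(3)$, and (\ref{eqn:integraltest}) is precisely the Bessel integral test behind it. One combines the transience estimate $\pr{R\text{ ever enters }B(0,r)\mid R_t=z}=r/z$ (for $r<z$), the scaling $(R_{2^k s})_s\overset{d}{=}(2^{k/2}R_s)_s$, and the fact that $R_{2^k}/2^{k/2}$ has a $\chi_3$-type law, so that $\IE[1/R_{2^k}]=O(2^{-k/2})$ and $\pr{R_{2^k}\le\eta\,2^{k/2}}=O(\eta^3)$, both uniformly in $k$. On the block $[2^k,2^{k+1}]$ one has $g\le g(2^{k+1})$, whence the event $E_k:=\{R_s<g(s)\text{ for some }s\in[2^k,2^{k+1}]\}$ is contained in $\{R\text{ enters }B(0,g(2^{k+1}))\text{ at some time}\ge2^k\}$, and the Markov property gives $\pr{E_k}\le g(2^{k+1})\,\IE[1/R_{2^k}]+\pr{R_{2^k}\le g(2^{k+1})}\le C\,\delta_k$ with $\delta_k:=g(2^{k+1})\,2^{-(k+1)/2}$, where $\sum_k\delta_k<\infty$ is equivalent to (\ref{eqn:integraltest}). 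Taking $k_0$ with $\sum_{k\ge k_0}C\delta_k<\tfrac12$ yields $\pr{R_s\ge g(s)\ \forall\,s\ge2^{k_0}}>\tfrac12$; the finite stretch $[0,2^{k_0}]$ supplies a further fixed positive factor because $R_0=y>0$, and the Markov property at $2^{k_0}$ glues the two pieces to give $q(y)>0$.

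For the necessity statement I would argue that, when $f$ is convex or concave with $f(0)>0$ and (\ref{eqn:integraltest}) fails, the order is genuinely different: if (con)vexity forces $f$ eventually to lie above or below a non-horizontal line then $\pr{B_t\le f(t),\,t\le T}$ tends to a positive constant, so $T^{-1/2}$ is not the order; otherwise $f$ is sublinear of $\sqrt t$-scale and the time-change $B_t=\sqrt t\,X_{\log t}$ converts the probability into that of an Ornstein--Uhlenbeck process staying below a barrier, which decays like $T^{-\lambda}$ with $\lambda$ the relevant principal Dirichlet eigenvalue, and the failure of (\ref{eqn:integraltest}) --- equivalently $\sum_k\delta_k=\infty$, so that a Borel--Cantelli version of the estimate above forbids $R$ from staying above $-f$ with positive probability --- is exactly what forces $\lambda\neq\tfrac12$. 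The main obstacle throughout is the uniform inequality $\pr{E_k}\le C\delta_k$ in (i), that is, recognizing (\ref{eqn:integraltest}) as precisely the Bessel integral test; once it is in hand, the monotone-to-general reduction and the eigenvalue bookkeeping in the necessity part are routine.
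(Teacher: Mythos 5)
Your treatment of the core difficulty --- the lower bound for a decreasing boundary --- is sound in outline but follows a genuinely different route from the paper's. The paper applies the Cameron--Martin--Girsanov theorem to absorb the drift $\int_0^t f'(s)\,\d s$, then Jensen's inequality to pull the expectation into the exponent, integrates by parts to rewrite $\int_0^T f'(s)\,\d Y_s$ in terms of $f''$ and $f'(T)$, and reduces everything to the single moment estimate $\E[B_u \mid \sup_{t\le T}B_t\le f(0)]\ge -c\sqrt u$ (Lemma~\ref{lem:bessellem}); the Bessel process appears only to justify that one inequality, and the integral test enters through $\int |f''(s)|\sqrt s\,\d s<\infty$ and $\sqrt T|f'(T)|=O(1)$. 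You instead pass to $\mathrm{BES}(3)$ pathwise via the Doob $h$-transform and run a dyadic union bound using the exact hitting probability $r/z$, with the integral test appearing as $\sum_k g(2^{k+1})2^{-k/2}<\infty$. Both hinge on the same repulsion phenomenon, but they buy different things: your argument needs only monotonicity of $f$ (no second derivative, no convexity) and is essentially the classical Dvoretzky--Erd\H{o}s-type integral test for $\mathrm{BES}(3)$, close in spirit to Uchiyama's original proof; the paper's argument is shorter, yields the explicit quantitative bound in the first bullet of Theorem~\ref{thm:provedhere}, and is advertised as more likely to generalize beyond Brownian motion since it avoids exact hitting identities. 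Minor gaps in your sketch that need patching: the joint event $\{R\ge g \text{ on }[0,S]\}\cap\{R_S\in[\sqrt S,2\sqrt S]\}$ is not handled by intersecting two fixed-probability events (use the Markov property at $S/2$ together with $g(S)=o(\sqrt S)$), and the union bound over $k\ge k_0$ must be carried out conditionally on $\cF_{2^{k_0}}$, uniformly in the starting point, for the gluing to be legitimate --- both are routine.

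Two caveats on scope. First, your necessity paragraph is a heuristic, not a proof: for a concave $f$ with $f(t)=o(\sqrt t)$ failing the test (e.g.\ $f(t)=\sqrt t/\log t$), the time-changed barrier for the Ornstein--Uhlenbeck process tends to zero rather than being fixed, so there is no single principal eigenvalue $\lambda\neq\tfrac12$ to point to; the actual correction is a slowly varying or sub-polynomial factor, and establishing it requires a quantitative converse to your union bound (a second-moment or Borel--Cantelli II argument), which you do not supply. Second, the monotone-to-general reduction is only gestured at, and as you yourself note the running extrema can destroy the integral test. Neither of these is proved in the paper either --- the paper proves only the decreasing convex case (Theorem~\ref{thm:provedhere}) and defers the remainder of Theorem~\ref{thm:main} to Uchiyama, G\"artner and Novikov --- so on the part the paper actually proves, your proposal is a correct alternative; on the parts it does not, yours is likewise incomplete.
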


Here and in the following, we denote $a(t)\approx b(t)$ if $c_1 a(t)\leq b(t) \leq c_2 a(t)$ for some constants $c_1, c_2$ and all $t$ sufficiently large.

Even though the above-mentioned problem has been solved by Uchi\-yama, there have been various attempts to simplify the proof of this result and to give an interpretation for the integral test (\ref{eqn:integraltest}). It is the purpose of this note (a) to give a simplified proof of the theorem for the case of a decreasing boundary. Our proof (b) also allows to interpret the integral test as coming from a repulsion effect of the three-dimensional Bessel process and (c) gives hope to be generalized to other processes, contrary to the existing proofs, which all make use of very specific known identities for Brownian motion.

Let us assume for a moment that $f$ is monotone. Note that the sufficiency part of the theorem can be decomposed into two parts: if $f'\geq 0$ one needs an upper bound of the probability in question, while if $f'\leq 0$ one needs a lower bound. The first case is much better studied; in particular, Novikov \cite{Novneu} gives a relatively simple proof of the theorem in this case. Contrary, in case of a decreasing boundary he wonders that ``it would be interesting to find an elementary proof of this bound'' (\cite{Novneu}, p.\ 723). We shall provide such an elementary proof here.

The remainder of this note is structured as follows.  Section~\ref{sec:proof} contains the proof of the theorem, which now fits on half a page. We also outline the relation to the Bessel process. In Section~\ref{sec:wlog}, we list some additional remarks.

\section{Proof} \label{sec:proof}
We give a proof of the following theorem, which concerns the part of Theorem~\ref{thm:main} related to the decreasing boundary.

\begin{thm} \label{thm:provedhere}
 Let $f : [0,\infty) \to \R$ be a twice continuously differentiable function with $f(0)>0$.
\begin{itemize}
 \item Then for some absolute constants $0<c_1,c_2<\infty$ we have
\begin{align*}
& \pr{ B_t \leq f(t), 0\leq t\leq T}  \\
& \geq  \pr{ B_t \leq f(0), 0\leq t\leq T} \\
& ~~~~\cdot \exp\left(-\frac{1}{2}\int_0^T f'(s)^2 \d s - c_1 \int_0^T |f''(s)| \sqrt{s}\ \d s - c_2 \sqrt{T} |f'(T)|\right). 
\end{align*}
\item In particular, if (\ref{eqn:integraltest}) holds and $f'\leq 0$, $f''\geq 0$, for large enough arguments, then we have
\begin{align}\label{eqn: asymp}
  \pr{ B_t \leq f(t), t\leq T}\approx T^{-1/2},\qquad \text{as $T\to\infty$.}
\end{align}
\end{itemize}
\end{thm}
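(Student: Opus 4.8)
The plan is to reduce the moving-boundary event to the constant-boundary event $\{B_t\leq f(0),\ 0\leq t\leq T\}$ by a change of measure, and then to control the resulting density through (i) an integration by parts, (ii) Jensen's inequality, and (iii) the repulsion of the three-dimensional Bessel process. Concretely, write $f(t)=f(0)+\int_0^t f'(s)\,\d s$ and put $\beta_t:=B_t-\int_0^t f'(s)\,\d s$. Since $f'$ is continuous, hence bounded, on $[0,T]$, the exponential $\mathcal{E}_T:=\exp\bigl(\int_0^T f'(s)\,\d B_s-\tfrac12\int_0^T f'(s)^2\,\d s\bigr)$ is a genuine martingale, and by Girsanov's theorem $\beta$ is a standard Brownian motion under $\mathbb{Q}$ with $\d\mathbb{Q}=\mathcal{E}_T\,\d\mathbb{P}$. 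Because $\{B_t\leq f(t),\ t\leq T\}=\{\beta_t\leq f(0),\ t\leq T\}$, reverting to $\mathbb{P}$ and rewriting $\mathcal{E}_T^{-1}$ through $\beta$ (then renaming $\beta$ as $B$) gives
\[
\pr{B_t\leq f(t),\ t\leq T}
= e^{-\frac12\int_0^T f'(s)^2\,\d s}\,
\mathbb{E}\!\left[\mathbf{1}_{A_T}\exp\!\Bigl(-\!\int_0^T f'(s)\,\d B_s\Bigr)\right],\qquad A_T:=\{B_t\leq f(0),\ t\leq T\}.
\]

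Next, since $f\in C^2$ and $B_0=0$, integration by parts removes the stochastic integral: $-\int_0^T f'(s)\,\d B_s=-f'(T)B_T+\int_0^T f''(s)B_s\,\d s=:X$. Applying the conditional Jensen inequality under $\mathbb{P}(\,\cdot\mid A_T)$,
\[
\mathbb{E}\!\left[\mathbf{1}_{A_T}e^X\right]=\mathbb{P}(A_T)\,\mathbb{E}\bigl[e^X\mid A_T\bigr]\ \geq\ \mathbb{P}(A_T)\exp\bigl(\mathbb{E}[X\mid A_T]\bigr),
\]
with $\mathbb{E}[X\mid A_T]=-f'(T)\,\mathbb{E}[B_T\mid A_T]+\int_0^T f''(s)\,\mathbb{E}[B_s\mid A_T]\,\d s$ and $\mathbb{P}(A_T)=\pr{B_t\leq f(0),\ t\leq T}$. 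Hence the first assertion of Theorem~\ref{thm:provedhere} follows once we establish the a priori bound $\bigl|\mathbb{E}[B_s\mid A_T]\bigr|\leq c\sqrt{s}$ for all $0\leq s\leq T$, with an absolute constant $c$: this immediately bounds $\mathbb{E}[X\mid A_T]$ from below by $-c\sqrt{T}\,|f'(T)|-c\int_0^T|f''(s)|\sqrt{s}\,\d s$.

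This a priori bound is the heart of the argument and the only place the Bessel process enters; I expect it, together with checking that the correlation inequality below applies, to be the main obstacle. Set $\widehat B_t:=f(0)-B_t$, a Brownian motion from $f(0)$, so $A_T=\{\widehat B_t\geq 0,\ t\leq T\}$. For the upper bound $\mathbb{E}[B_s\mid A_T]\leq 0$ I would invoke the FKG/association inequality for Brownian motion (Pitt): $\widehat B_s$ and $\mathbf{1}_{A_T}$ are nondecreasing functionals of the path $\widehat B$ and all covariances $\min(\cdot,\cdot)$ are nonnegative, whence $\mathbb{E}[\widehat B_s\mathbf{1}_{A_T}]\geq\mathbb{E}[\widehat B_s]\,\mathbb{P}(A_T)=f(0)\,\mathbb{P}(A_T)$. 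For the lower bound I would compare with the three-dimensional Bessel process $R$ started at $f(0)$. Conditioning $\widehat B$ to stay nonnegative on $[0,T]$ reweights the law of $(\widehat B_u)_{u\leq s}$ killed at $0$ by $g_{T-s}(\widehat B_s)$, where $g_u(x)=\mathbb{P}_x(\min_{[0,u]}\widehat B\geq 0)=2\Phi(x/\sqrt u)-1$, whereas conditioning it to stay nonnegative forever --- i.e.\ the law of $R$ --- reweights the same killed law by $\widehat B_s/f(0)$. Since $x\mapsto g_u(x)/x$ is nonincreasing (it equals $2u^{-1/2}$ times the average of the standard density over $[0,x/\sqrt u]$), Chebyshev's correlation inequality gives $\mathbb{E}[\widehat B_s\mid A_T]\leq\mathbb{E}[R_s]$. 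Finally $R_s$ is the modulus of a three-dimensional Brownian motion started at distance $f(0)$ from the origin, so $R_s\leq f(0)+|W_s|$ pathwise, whence $\mathbb{E}[R_s]\leq f(0)+2\sqrt{2/\pi}\,\sqrt{s}$, and therefore $\mathbb{E}[B_s\mid A_T]=f(0)-\mathbb{E}[\widehat B_s\mid A_T]\geq -2\sqrt{2/\pi}\,\sqrt{s}$. This quantitative repulsion is the ``repulsion effect'' advertised in the introduction.

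It remains to deduce $(\ref{eqn: asymp})$ under $(\ref{eqn:integraltest})$ with $f'\leq 0,\ f''\geq 0$ eventually. The upper bound is elementary: such an $f$ is bounded above, say by $M$, so $\pr{B_t\leq f(t),\ t\leq T}\leq\mathbb{P}(\max_{[0,T]}B\leq M)=2\Phi(M/\sqrt T)-1\leq c_2T^{-1/2}$. For the lower bound, combine the first part with $\pr{B_t\leq f(0),\ t\leq T}=2\Phi(f(0)/\sqrt T)-1\geq c_1T^{-1/2}$; it then suffices that $\int_0^\infty f'(s)^2\,\d s$, $\int_0^\infty|f''(s)|\sqrt{s}\,\d s$ and $\sup_{T\geq1}\sqrt{T}\,|f'(T)|$ are all finite, so that the exponential factor stays bounded away from $0$. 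Here $(\ref{eqn:integraltest})$ is used: writing $g:=-f'\geq 0$ (eventually nonincreasing, by $f''\geq0$), $(\ref{eqn:integraltest})$ forces $g(s)\to0$ and $|f(s)|/\sqrt{s}\to0$ (otherwise $f$ decays too fast) and is readily seen to be equivalent to $\int_1^\infty g(s)s^{-1/2}\,\d s<\infty$; a dyadic-block estimate using monotonicity of $g$ turns this into $\int_1^\infty g(s)^2\,\d s<\infty$ and, after one integration by parts, into $\int_1^\infty f''(s)\sqrt{s}\,\d s<\infty$, while $|f'(2T)|\leq(|f(T)|+|f(2T)|)/T$ together with $|f(s)|/\sqrt{s}\to0$ gives $\sqrt{T}\,|f'(T)|\to0$. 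Combining the two bounds yields $(\ref{eqn: asymp})$.
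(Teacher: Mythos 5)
Your proof follows the paper's argument step for step: the Cameron--Martin--Girsanov change of measure, integration by parts to eliminate the stochastic integral, Jensen's inequality conditional on $A_T=\{B_t\leq f(0),\,t\leq T\}$, and the repulsion bound $-c\sqrt{s}\leq \E[B_s\mid A_T]\leq 0$, which is exactly the paper's Lemma~\ref{lem:bessellem}. The only substantive difference is that you actually prove the two ingredients the paper leaves as sketches or remarks --- the repulsion lemma (your Pitt/FKG argument for the upper bound and the Chebyshev correlation comparison of the $h$-transform weights $g_{T-s}(x)$ and $x$ for the lower bound via the three-dimensional Bessel process are both correct) and the derivation of the finiteness of $\int_0^\infty f'(s)^2\,\d s$, $\int_0^\infty|f''(s)|\sqrt{s}\,\d s$ and $\sup_T\sqrt{T}|f'(T)|$ from (\ref{eqn:integraltest}) under the eventual monotonicity and convexity assumptions --- so your write-up is a correct, and more complete, rendition of the same proof.
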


\begin{proof}
The Cameron-Martin-Girsanov theorem implies that
\begin{align*}
& \pr{ B_t \leq f(t), 0\leq t\leq T} \notag \\
=~ &  \pr{ B_t - \int_0^t f'(s)\d s \leq f(0),0\leq  t\leq T} \notag  \\
=~ &\frac{\E [ e^{-\int_0^T f'(s) \d B_s} \indi{B_t\leq f(0), 0\leq t\leq T}]}{\pr{B_t\leq f(0),0\leq  t\leq T}}\, \cdot  \, \pr{B_t\leq f(0), 0\leq t\leq T} e^{-\frac{1}{2} \int_0^T f'(s)^2 \d s} \notag  \\
=~ & \E \left[ e^{-\int_0^T f'(s) \d B_s} \left| \sup_{0\leq t\leq T} B_t\leq f(0)\right. \right] \cdot  \pr{B_t\leq f(0),0\leq  t\leq T} e^{-\frac{1}{2} \int_0^T f'(s)^2 \d s}. 
\end{align*}
By Jensen's inequality, the first term
can be estimated from below by
\begin{equation} \label{eqn:afterjensen}
 \exp\left( \E \left[ - \int_0^T f'(s) \d B_s \left| \sup_{0\leq t\leq T} B_t\leq f(0)\right. \right] \right) = \exp\left( - \E \int_0^T f'(s) \d Y_s \right),
\end{equation}
where we denote by $Y$ the law of $B$ conditioned on $\sup_{0\leq t\leq T} B_t\leq f(0)$. Further,
\begin{eqnarray*}
\int_0^T f'(s) \d Y_s & =&
\int_0^T \big[ \int_0^s f''(u) \d u + f'(0) \big] \d Y_s\\
& =&\int_0^T (Y_T-Y_u) f''(u)\d u  + f'(0) Y_T\\
& =&-\int_0^T Y_u f''(u) \d u  +Y_T f'(T).
\end{eqnarray*}
so that the term in (\ref{eqn:afterjensen}) equals
\begin{equation} \label{eqn:noreg}
\exp\left( \int_0^T \E Y_u \, f''(u) \d u  +  \E Y_T \, (-f'(T))\right),
\end{equation}
and hence the theorem is proved as soon as Lemma~\ref{lem:bessellem} below is shown.
\end{proof}

\begin{lem} \label{lem:bessellem}
 Let $B$ be a Brownian motion and $f(0)>0$ be some constant. Then there is a constant $c$ such that
$$
\E \left[ B_u \left| \sup_{0\leq t\leq T} B_t\leq f(0)\right. \right] \geq - c \sqrt{u},\qquad \forall 0\leq u\leq T.
$$
\end{lem}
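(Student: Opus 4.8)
The plan is to relate the law of Brownian motion conditioned to stay below the level $f(0)$ to the three-dimensional Bessel process, and then to use the well-known fact that the Bessel process drifts away from the origin at rate $\sqrt{u}$. Concretely, set $a := f(0) > 0$ and let $X_t := a - B_t$, so that conditioning $\{\sup_{t\le T} B_t \le a\}$ becomes conditioning $X$ (started at $a$) to stay positive on $[0,T]$. The quantity to bound is $\E[B_u \mid \cdot] = a - \E[X_u \mid X_t > 0,\ t\le T]$, so it suffices to produce an \emph{upper} bound $\E[X_u \mid X_t > 0,\ t\le T] \le a + c\sqrt{u}$ for all $0\le u\le T$. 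I would introduce the Bessel-type process by a standard $h$-transform: Brownian motion started at $a>0$ and conditioned to stay positive up to time $T$ is absolutely continuous with respect to the three-dimensional Bessel process $R$ started at $a$, with a Radon--Nikodym density on $\mathcal F_u$ (for $u < T$) proportional to $\Phi\big((R_u)/\sqrt{T-u}\big)/R_u$ or similar, where $\Phi$ is a bounded function (here I'd cite the explicit formula, e.g.\ from Revuz--Yor or Pitman's theorem). This density is bounded above by $C/R_u \cdot R_u \le C$ after the appropriate normalization by the survival probability, so $\E[X_u \mid X_t>0, t\le T] \le C\, \E R_u$.

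The key step is then the bound $\E R_u \le a + c\sqrt{u}$, which is immediate: a three-dimensional Bessel process started at $a$ satisfies $R_u \overset{d}{=} |a\,e_1 + \beta_u|$ in law for a $3$-dimensional Brownian motion $\beta$, so $\E R_u \le a + \E|\beta_u| \le a + \sqrt{3u}$ by the triangle inequality and Jensen. Combining, $\E[X_u \mid \cdot] \le C(a + \sqrt{3u}) \le C' \sqrt{u}$ once we absorb the constant $a$ into the $\sqrt u$ term — but here one must be careful for \emph{small} $u$, where $\sqrt u \to 0$ while the left side stays near $a$; to handle this I would instead argue directly that $\E[X_u\mid\cdot] \le a + c\sqrt u$ (keeping the $+a$), which translates back to $\E[B_u\mid\cdot] \ge -c\sqrt u$, exactly as stated. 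The edge case $u=T$ needs a separate one-line argument since the $h$-transform density degenerates there; one can either take a limit $u \uparrow T$ using Fatou, or note that $\{\sup_{t\le T}B_t \le a\}$ and $\{\sup_{t\le T'}B_t\le a\}$ for $T' > T$ differ controllably and apply the interior bound.

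An alternative, possibly cleaner route that avoids the explicit Bessel $h$-transform: write $\E[B_u\mid \sup_{[0,T]}B \le a]$ using the reflection-principle density for Brownian motion killed at $a$, i.e.\ the transition density $p_T^a(a,\cdot)$ on $(-\infty,a)$ obtained by the method of images, integrate $B_u$ against the bridge-type decomposition $p_u^a(a,x)\,q(x)$ where $q(x) = \pr{\sup_{[u,T]}B \le a \mid B_u = x}$, and bound the resulting one-dimensional integral. The main obstacle in either approach is the same: controlling the conditioning \emph{uniformly in $u$ up to and including $u=T$}, since the conditioning event depends on $T$ and the natural density representations blow up as $u\to T$. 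I expect the Bessel comparison to be the conceptually right tool — it is also what yields the promised interpretation of the integral test \eqref{eqn:integraltest} as a repulsion effect — and the uniformity issue to be the only genuinely technical point, resolvable by a monotonicity/Fatou argument near $u=T$.
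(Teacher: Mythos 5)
Your proposal travels the same road as the paper, which itself only sketches the argument: both reduce the lemma to the three\hyp{}dimensional Bessel process and to the elementary bound $\E R_u \le f(0)+\sqrt{3u}$ coming from the representation of $R$ as the modulus of a three\hyp{}dimensional Brownian motion (your alternative route via the explicit joint law of the maximum and the endpoint is exactly the paper's other suggestion, citing \cite{ks}, Prop.~2.8.1). The one step that is not correct as written is the claim that the normalized Radon--Nikodym density of the conditioned law with respect to the Bessel law on $\mathcal{F}_u$ is bounded by an absolute constant. Writing $a=f(0)$, $X=a-B$ and $\tau_0$ for the hitting time of $0$ by $X$, that density equals $\frac{a}{X_u}\cdot\frac{\P_{X_u}[\tau_0>T-u]}{\P_{a}[\tau_0>T]}$; using $\P_x[\tau_0>s]\le \sqrt{2/\pi}\,x/\sqrt{s}$ and $\P_a[\tau_0>T]\asymp\min(1,a/\sqrt{T})$, the best uniform bound it admits is of order $\sqrt{T/(T-u)}$, which degenerates on the whole range of $u$ comparable to $T$, not only at the single point $u=T$. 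Consequently a Fatou or limiting argument at $u=T$ alone does not close the gap, and the intermediate conclusion $\E[X_u\mid\cdot]\le C\,\E R_u$ with a universal $C$ is not justified for, say, $u\in(T/2,T)$.

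The repair is the monotonicity you mention, pushed to its natural extreme: $T\mapsto\E[X_u\mid\tau_0>T]$ is nondecreasing, because conditioning a Markov process on longer survival makes its marginals stochastically larger (the likelihood ratio $y\mapsto\P_y[\tau_0>T'-u]/\P_y[\tau_0>T-u]$ is nondecreasing for $T'>T$). One may therefore let $T\to\infty$ in the conditioning and bound $\E[X_u\mid\tau_0>T]\le\E R_u\le a+\sqrt{3u}$ directly, with no density estimate at all; this is precisely the stochastic domination $Y\ge f(0)-R$ that the paper's sketch appeals to (the paper's ``$Y\le X$'' followed by ``$\E Y_s\ge\E X_s$'' is evidently a sign slip for the same comparison). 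With that substitution your argument is complete, including your correct observation that one must retain the additive constant $a$ and only pass from $a+c\sqrt{u}$ to the stated $-c\sqrt{u}$ after subtracting from $a$.
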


In order to show this lemma one can use the distribution of maximum over an interval and terminal value of Brownian motion, which is explicitly known (see e.g.\ \cite{ks}, Prop.\ 2.8.1). However, we do not include this proof here. Let us rather mention that the lemma can also be seen through a relation to the three-dimensional Bessel process, as detailed now.

Recall that a (three-dimensional) Bessel process has three representations: it can be defined firstly as Brownian motion conditioned to be positive for all times, secondly as the solution of a certain stochastic differential equation (which gives rise to Bessel processes of other dimensions), and thirdly as the modulus of a three-dimensional Brownian motion, see e.g.\ \cite{ks}, Chapter 3.3.C. If we denote by $Y$ the law of a Brownian motion $B$ under the conditioning $\{ \sup_{0\leq t\leq T} B_t\leq f(0) \}$, it seem intuitively clear that one can find a Bessel process $-X$ such that $Y\leq X$, using the first representation of $-X$. Now, taking expectations and using the third representation of $-X$ (and Brownian motion scaling) it is clear that $\E Y_s \geq \E X_s = - c \sqrt{s}$. Thus, the integral test is related to the repulsion of Brownian motion by the conditioning.

\section{Further remarks} \label{sec:wlog}

\begin{remark}
 Clearly the value of $f$ in a finite time horizon $[0,t_0]$ does not matter for the outcome of the problem, as we are interested in asymptotic results. Any finite time horizon can be cut off with the help of Slepian's inequality \cite{Sle}:
$$
\pr{ B_t \leq f(t), 0\leq t\leq T}  \geq \pr{ B_t \leq f(t), 0\leq t\leq t_0} \cdot \pr{ B_t \leq f(t), t_0\leq t\leq T}.
$$
\end{remark}

\begin{remark}
Let us comment on the regularity assumptions: it is clear that these are of technical matter and of no importance to the question. Note that one can easily modify a regular function $f$ such that either (\ref{eqn:integraltest}) fails or (\ref{eqn:assertmain}) does not hold. The only way to avoid pathologies and to prove a general result is to assume regularity. Note that the theorem is obviously true if we replace $f$ by a whatever irregular function $g$ with $f\leq g$. The same can be said about the monotonicity/convexity assumption in the second part of Theorem~\ref{thm:provedhere}.
\end{remark}

\begin{remark}
 It is easy to see that the integral test (\ref{eqn:integraltest}) implies
$$
 \int_0^\infty  f'' (s) s^{1/2} \dd s < \infty\qquad\text{and}\qquad \int_0^\infty  f' (s)^2 \dd s < \infty
$$
under the assumption of monotonicity and concavity.
\end{remark}

\begin{remark} \label{rem:novikov}
 Thanks to \cite{Novneu}, Theorem 2, if (\ref{eqn:integraltest}) holds one does not only obtain (\ref{eqn: asymp}) but also the strong asymptotic order
\begin{align*}
  \lim_{T\to\infty} T^{1/2} \pr{ B_t \leq f(t), t\leq T} = \sqrt{\frac2\pi}~ \E B_\tau,
\end{align*}
where $0< \E B_\tau=\E f(\tau)<\infty$ with $\tau := \inf \{t >0: B_t = f(t) \}$.
\end{remark}

\begin{remark}
 The last remark concerns possible generalizations to other processes. Note that the technique of the main proof (Girsanov's theorem, Jensen's inequality) does carry over to other processes. The crucial point is determining the repulsion effect of the conditioning in Lemma~\ref{lem:bessellem}. The authors do not see at the moment how a similar lemma can be established for processes other than Brownian motion, e.g.\ FBM.
\end{remark}

\noindent {\bf Acknowledgement:} Frank Aurzada and Tanja Kramm were supported by the DFG Emmy Noether programme.

\bibliographystyle{abbrv}

\end{document}